\newtheorem{theorem}{Theorem}
\newtheorem{lemma}[theorem]{Lemma}
\newtheorem{proposition}[theorem]{Proposition}
\newtheorem{corollary}[theorem]{Corollary}
\newtheorem{definition}[theorem]{Definition}
\newenvironment{proof}{\noindent {\it Proof}.}{\hfill$\Box$}
\newcommand{\lignelarge}{\lower1.5ex\hbox{\rule{0ex}{4ex}}}
\let\disp\displaystyle
\newcommand{\IZ}{{\mathbb Z}}
\let\Lbrack\llbracket
\let\Rbrack\rrbracket
\begin{document}

\title{Topological Markov chains of given entropy and period with or without measure of maximal entropy}
\author{Sylvie Ruette}
\date{June 1, 2018}

\maketitle

\begin{abstract}
We show that, for every positive real number $h$ and every positive
integer $p$, there exist oriented graphs $G, G'$ (with countably 
many vertices) that are strongly connected, of period $p$, of
Gurevich entropy $h$, such
that $G$ is positive recurrent
(thus the topological Markov chain on $G$ admits a measure of maximal 
entropy)  and $G'$ is transient
(thus the topological Markov chain on $G'$
admits no measure of maximal entropy).
\end{abstract}

\section{Vere-Jones classification of graphs}

In this paper, all the graphs are oriented, have a finite or 
countable set of vertices and, if $u,v$ are two vertices, there
is at most one arrow $u\to v$. 
A {\em path} of length $n$ in the graph $G$ is a sequence of vertices 
$(u_0,u_1,\ldots,u_n)$ such that $u_i\to u_{i+1}$
in $G$ for all $i\in\Lbrack 0,n-1\Rbrack$.
This path is called a {\em loop} if $u_0=u_n$.

\begin{definition}
Let $G$ be an oriented graph and let $u,v$ be two vertices in $G$. 
We define the following quantities.
\begin{itemize}
\item $p_{uv}^G(n)$ is the number of paths $(u_0,u_1,\ldots,u_n)$ 
such that $u_0=u$ and $u_n=v$; $R_{uv}(G)$ is the radius of convergence
of the series $\sum p_{uv}^G(n)z^n$.
\item $f_{uv}^G(n)$ is the number of paths $(u_0,u_1,\ldots,u_n)$ 
such that $u_0=u$, $u_n=v$ and $u_i\not=v$ for all $0<i<n$; 
$L_{uv}(G)$ is the radius of convergence
of the series $\sum f_{uv}^G(n)z^n$.
\end{itemize}
\end{definition}

\begin{definition}
Let $G$ be an oriented graph and  $V$ its set of vertices.
The graph $G$ is \emph{strongly connected} if for all $u,v\in V$, 
there exists a path from $u$ to $v$ in $G$.
The \emph{period} of a strongly connected graph $G$ is the
greatest common divisor of $(p_{uu}^G(n))_{u\in V, n\ge 0}$. The graph
$G$ is \emph{aperiodic} if its period is $1$.
\end{definition}

\begin{proposition}[Vere-Jones \cite{Ver1}]
Let $G$ be an  oriented graph. If $G$ is strongly connected, 
$R_{uv}(G)$ does not depend on $u$ and $v$; it is denoted by $R(G)$.
\end{proposition}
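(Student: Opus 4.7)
The plan is to exploit strong connectivity by fixing auxiliary connecting paths and using concatenation to compare the counting sequences $p_{uv}^G(n)$ and $p_{u'v'}^G(n)$ for two arbitrary pairs of vertices. Since comparing one pair to another requires only two independent swaps, it is enough to show $R_{uv}(G)=R_{u'v'}(G)$ in general, and a symmetry argument finishes the job.

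Here are the steps I would carry out. Pick two pairs of vertices $(u,v)$ and $(u',v')$. By strong connectivity there exists a path from $u'$ to $u$ of some length $a\ge 0$ and a path from $v$ to $v'$ of some length $b\ge 0$. Given any path from $u$ to $v$ of length $n$, prepending the first auxiliary path and appending the second produces a path from $u'$ to $v'$ of length $n+a+b$; different original paths produce different concatenated paths, so
\[
p_{u'v'}^G(n+a+b)\;\ge\; p_{uv}^G(n)\qquad\text{for all }n\ge 0.
\]
Now observe that multiplying a power series by a fixed power of $z$ does not change its radius of convergence, so $\sum_n p_{uv}^G(n)z^{n+a+b}$ has the same radius as $\sum_n p_{uv}^G(n)z^n$, namely $R_{uv}(G)$. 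The inequality above gives, termwise for $z\ge 0$,
\[
\sum_{n\ge 0} p_{uv}^G(n)\,z^{n+a+b}\;\le\;\sum_{m\ge a+b} p_{u'v'}^G(m)\,z^m\;\le\;\sum_{m\ge 0}p_{u'v'}^G(m)\,z^m,
\]
so whenever the right-hand side converges the left-hand side does too. This yields $R_{u'v'}(G)\le R_{uv}(G)$. Swapping the roles of $(u,v)$ and $(u',v')$ (again using strong connectivity to furnish connecting paths the other way) gives the reverse inequality, hence equality.

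The only step that needs any real attention is the handover from the combinatorial inequality to the statement about radii of convergence; the shift by $a+b$ must be absorbed correctly, which is why I prefer to phrase the comparison directly on the series (with the same variable $z$) rather than on the $n$th root test, where one would also have to argue that $p_{u'v'}^G(n+a+b)^{1/(n+a+b)}$ and $p_{uv}^G(n)^{1/n}$ share the same $\limsup$. Both routes work, but the series comparison is the cleaner main obstacle.
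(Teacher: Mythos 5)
Your argument is correct. The paper itself gives no proof of this proposition --- it is quoted from Vere-Jones with a citation --- so there is nothing internal to compare against, but your concatenation argument (fix paths $u'\to u$ of length $a$ and $v\to v'$ of length $b$, deduce $p_{u'v'}^G(n+a+b)\ge p_{uv}^G(n)$ by injectivity of the splicing map, then compare the series termwise for $z\ge 0$ and use symmetry) is the standard proof of this fact and is complete as written.
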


If there is no confusion, $R(G)$ and $L_{uv}(G)$ will be written $R$ and 
$L_{uv}$. 

\medskip
In \cite{Ver1} Vere-Jones gives a classification of strongly connected graphs
as transient, null recurrent or positive recurrent. These definitions
are lines 1 and 2 in Table~\ref{tab:classification}. The other lines
of Table~\ref{tab:classification} state
properties of the series $\sum p_{uv}^G(n)z^n$, which give alternative
definitions (lines 3 and 4 are in \cite{Ver1}, 
the last line is Proposition~\ref{prop:transient-R=L}). 

\begin{table}[ht] 
\begin{center}
\begin{tabular}{l|c|c|c}
                     & transient   & null      & positive  \\
                     &             & recurrent & recurrent \\
\hline
\lignelarge
$\disp\sum_{n>0} f^G_{uu}(n)R^n$  & $<1$        & $1$       & $1$     \\
\hline
\lignelarge
$\disp\sum_{n>0} nf^G_{uu}(n)R^n$ &$\leq+\infty$&$+\infty$  &$<+\infty$ \\
\hline
\lignelarge
$\disp\sum_{n\geq 0} p^G_{uv}(n)R^n$  &$<+\infty$   &$+\infty$  &$+\infty$ \\
\hline
\lignelarge
$\disp\lim_{n\to+\infty} p^G_{uv}(n)R^n$  & $0$         & $0$       &$\lambda_{uv}>0$\\
\hline
\lignelarge
                     & $R=L_{uu}$       & $R=L_{uu}$     & $R\leq L_{uu}$
\end{tabular}
\end{center}
\caption{properties of the series associated to a transient, null 
recurrent or positive recurrent graph $G$ ($G$ is strongly connected); these
properties do not depend on the vertices $u,v$. \label{tab:classification}}
\end{table}

\begin{proposition}[Salama  \cite{Sal3}] \label{prop:transient-R=L}
Let $G$ be a strongly connected oriented graph. If $G$ is transient or null
recurrent, then $R=L_{uu}$ for all vertices $u$. Equivalently,
if there exists a vertex $u$ such that $R<L_{uu}$, then $G$ is positive 
recurrent.
\end{proposition}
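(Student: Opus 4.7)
I would prove the equivalent contrapositive stated at the end: assuming $R<L_{uu}$ for some vertex $u$, show that $G$ is positive recurrent. The workhorse will be the first-return decomposition of loops at $u$, which writes each path from $u$ to $u$ of length $n\ge 1$ uniquely as a concatenation of first-return loops. Writing $P_{uu}(z)=\sum_{n\ge 0}p_{uu}^G(n)z^n$ and $F_{uu}(z)=\sum_{n\ge 1}f_{uu}^G(n)z^n$, this yields
$$P_{uu}(z) \;=\; \sum_{k\ge 0} F_{uu}(z)^k \;=\; \frac{1}{1-F_{uu}(z)},$$
an identity that, by Tonelli's theorem applied to the underlying triple sum of non-negative terms, is valid as an equality in $[0,+\infty]$ for every real $z\ge 0$, without any appeal to analyticity. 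The trivial inequality $f_{uu}^G(n)\le p_{uu}^G(n)$ already gives $R\le L_{uu}$, so the real content is that strict inequality forces positive recurrence.

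Assuming $R<L_{uu}$, my first step is to observe that $R$ lies strictly inside the radius of convergence of $F_{uu}$. Term-by-term differentiation of a power series within its radius of convergence then gives $\sum_{n>0}nf_{uu}^G(n)R^n<+\infty$, which is already the second of the two defining conditions for positive recurrence in Table~\ref{tab:classification}. The entire proof thus reduces to showing $F_{uu}(R)=1$.

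For $z\in[0,R)$ the series $P_{uu}(z)$ converges, so the identity forces $F_{uu}(z)<1$; by continuity of $F_{uu}$ on $[0,L_{uu})$ this yields $F_{uu}(R)\le 1$. For the reverse inequality I would argue by contradiction: if $F_{uu}(R)<1$, continuity gives a point $z_0\in(R,L_{uu})$ with $F_{uu}(z_0)<1$, and the identity applied at $z_0$ then delivers $P_{uu}(z_0)=1/(1-F_{uu}(z_0))<+\infty$, contradicting that $R$ is the radius of convergence of $P_{uu}$. Hence $F_{uu}(R)=1$, and $G$ is positive recurrent.

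The one step that requires genuine care is this last contradiction: the identity $P_{uu}(z_0)=1/(1-F_{uu}(z_0))$ is being invoked at a point $z_0>R$ that lies outside the a priori domain of convergence of $P_{uu}$. The justification is precisely the non-negativity of the coefficients, which promotes the formal power-series identity to a bona fide equality in $[0,+\infty]$ at every $z\ge 0$; this is the only non-routine ingredient, and I would state and prove it up front rather than relying on complex-analytic continuation.
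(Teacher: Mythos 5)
Your argument is correct. Note first that the paper itself offers no proof of this proposition: it is quoted from Salama \cite{Sal3}, so there is no in-text argument to compare yours against line by line. What you give is the standard renewal-equation route, and it is complete: the first-return decomposition $P_{uu}(z)=\sum_{k\ge 0}F_{uu}(z)^k$ as an identity in $[0,+\infty]$ for $z\ge 0$ (Tonelli on non-negative terms), the observation that $R<L_{uu}$ puts $R$ strictly inside the disc of convergence of $F_{uu}$ and hence gives $\sum_{n>0}nf_{uu}^G(n)R^n<+\infty$ for free, and the two-sided squeeze forcing $F_{uu}(R)=1$. The contradiction step is sound precisely for the reason you flag: with non-negative coefficients, $\sum_n p_{uu}^G(n)z_0^n<+\infty$ at a real $z_0>R$ would contradict $R$ being the radius of convergence. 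Two small remarks. First, the equality $\sum_{k\ge 0}F_{uu}(z)^k=1/(1-F_{uu}(z))$ only holds when $F_{uu}(z)<1$ (for $F_{uu}(z)\ge 1$ the left side is $+\infty$ while the right side is not), but you only invoke the closed form at points where you have $F_{uu}(z_0)<1$, so nothing breaks. Second, you verify the positive-recurrence conditions at the particular vertex $u$ with $R<L_{uu}$; to conclude that $G$ itself is positive recurrent you need the vertex-independence of the Vere-Jones classification, which the paper records in the caption of Table~\ref{tab:classification} and which is worth citing explicitly. Your argument is also essentially the same circle of ideas as the paper's Lemma~\ref{lem:recurrent-R} (monotonicity and continuity of $F_{uu}$ on $[0,L_{uu})$ combined with the defining inequalities of Table~\ref{tab:classification}), so it fits naturally into the paper's framework.
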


\section{Topological Markov chains and Gurevich entropy}

Let $G$ be an oriented graph and $V$ its set of vertices. We define
$\Gamma_G$ as the set of two-sided infinite paths in $G$, that is,
\[
\Gamma_G:=\{(v_n)_{n\in\IZ} \mid  \forall n\in \IZ, v_n\to v_{n+1} 
\mbox{ in } G \}\subset V^{\IZ}.
\]
The map $\sigma$ is the shift on $\Gamma_G$. The 
\emph{topological Markov chain}
on the graph $G$ is the dynamical system $(\Gamma_G,\sigma)$.

The set $V$ is endowed with the discrete topology and $\Gamma_G$
is endowed with the induced topology of $V^{\IZ}$. 
The space $\Gamma_G$ is not compact unless $G$ is finite.

The topological Markov chain $(\Gamma_G,\sigma)$ is 
transitive if and only if the graph $G$ is strongly connected.
It is topologically mixing if and only if the graph $G$ is 
strongly connected and aperiodic.

\medskip
If $G$ is a finite graph, $\Gamma_G$ is compact and
the topological entropy $h_{top}(\Gamma_G,\sigma)$ is well defined
(see e.g. \cite{DGS} for the definition of the topological entropy).
If $G$ is a countable graph, the {\em Gurevich entropy} \cite{Gur1} 
of the graph $G$ (or of the topological Markov chain $\Gamma_G$)
is given by
\[
h(G):=\sup\{h_{top}(\Gamma_H,\sigma)\mid H\subset G, H \mbox{ finite}\}.
\]

This entropy can also be computed in a combinatorial way, as the exponential
growth of the number of paths with fixed endpoints.

\begin{proposition}[Gurevich \cite{Gur2}]\label{prop:hR}
Let $G$ be a strongly connected oriented graph. Then for all vertices $u,v$
\[
h(G)=\lim_{n\to+\infty}\frac{1}{n}\log p_{uv}^G(n)=-\log R(G).
\]
\end{proposition}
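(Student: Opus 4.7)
My plan has two parts: first, show $\lim_n \frac{1}{n}\log p_{uv}^G(n) = -\log R(G)$, essentially via Cauchy--Hadamard reinforced by Fekete's lemma; second, sandwich $h(G)$ between this quantity above and below by approximating with finite strongly connected subgraphs.

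For the existence and value of the limit, I would first take $u=v$. Since $R_{uu}(G)=R(G)$ by the preceding Vere-Jones proposition, Cauchy--Hadamard gives $\limsup_n \frac{1}{n}\log p_{uu}^G(n)=-\log R(G)$. Concatenating two loops at $u$ yields $p_{uu}^G(m+n)\ge p_{uu}^G(m)p_{uu}^G(n)$, so $(\log p_{uu}^G(n))$ is superadditive and Fekete's lemma upgrades the $\limsup$ to a limit that also equals $\sup_n \frac{1}{n}\log p_{uu}^G(n)$. For $u\ne v$, strong connectivity supplies paths $u\to v$ and $v\to u$ of lengths $a,b$, giving $p_{uu}^G(n+b)\ge p_{uv}^G(n)$ and $p_{uv}^G(n+a)\ge p_{uu}^G(n)$, which sandwiches $\frac{1}{n}\log p_{uv}^G(n)$ at $-\log R(G)$.

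For the upper bound $h(G)\le -\log R(G)$: any finite subgraph $H\subset G$ decomposes into strongly connected components $C$, and $h_{top}(\Gamma_H)=\max_C \log\rho(A_C)$, where $A_C$ is the adjacency matrix. For any vertex $w$ of such a $C$, $\log\rho(A_C)=\lim_n \frac{1}{n}\log p_{ww}^C(n)\le \lim_n \frac{1}{n}\log p_{ww}^G(n)=-\log R(G)$, since $p_{ww}^C(n)\le p_{ww}^G(n)$. Taking the supremum over finite $H\subset G$ yields the bound.

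For the lower bound $h(G)\ge -\log R(G)$: fix $u$ and, for each $N$, define $H_N$ as the finite subgraph of $G$ whose vertices and edges lie on some loop at $u$ in $G$ of length $\le N$. Then $H_N$ is strongly connected (each of its vertices lies on such a loop through $u$) and $p_{uu}^{H_N}(N)=p_{uu}^G(N)$, since every length-$N$ loop at $u$ in $G$ is entirely contained in $H_N$. Applying Fekete inside $H_N$,
\[
h_{top}(\Gamma_{H_N})=\sup_n \frac{1}{n}\log p_{uu}^{H_N}(n)\ge \frac{1}{N}\log p_{uu}^G(N),
\]
and letting $N\to\infty$ yields $h(G)\ge -\log R(G)$. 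The main difficulty I expect is this lower bound: one must exhibit finite subgraphs that both realize $p_{uu}^G(N)$ exactly at some length $N$ and are strongly connected, so that Fekete upgrades the Cauchy--Hadamard $\limsup$ to a genuine supremum that can be evaluated at a single $N$. The \emph{loop envelope} $H_N$ is tailored to satisfy both requirements simultaneously; the other steps are comparatively direct.
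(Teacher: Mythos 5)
The paper does not prove this proposition; it is quoted from Gurevich \cite{Gur2} without proof, so there is no in-paper argument to compare against. Your proof is correct and is essentially the classical argument: Cauchy--Hadamard plus superadditivity of $\log p_{uu}^G(n)$ for the identification of the growth rate, the component decomposition $h_{top}(\Gamma_H)=\max_C\log\rho(A_C)$ for the upper bound, and the ``loop envelope'' $H_N$ (finite because $p_{uu}^G(n)<+\infty$ for each $n$, strongly connected because every vertex and edge lies on a loop through $u$, and satisfying $p_{uu}^{H_N}(N)=p_{uu}^G(N)$) for the lower bound. You are right that the lower bound is where the content lies, and your $H_N$ handles it correctly. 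One caveat you should state explicitly: if $G$ has period $p\ge 2$, then $p_{uv}^G(n)=0$ for all $n$ outside a single residue class mod $p$, so the limit in the statement only exists along $n$ with $p_{uv}^G(n)>0$ (equivalently, one should read it as a $\limsup$, or as a limit over that arithmetic progression); Fekete gives $\lim = \sup$ only along that subsequence, not over all $n$. This is a defect of the statement as literally written rather than of your argument --- and it matters here, since the paper applies the proposition to graphs of arbitrary period --- but your write-up silently asserts a genuine limit over all $n$, which is false in the periodic case. With that reading fixed, the proof is complete.
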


Moreover, the variational principle is still valid for topological
Markov chains.

\begin{theorem}[Gurevich \cite{Gur1}]
Let $G$ be an oriented graph. Then
\[
h(G)=\sup\{h_{\mu}(\Gamma_G)\mid \mu
\ \sigma\mbox{-invariant probability measure}\}.
\]
\end{theorem}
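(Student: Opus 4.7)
The plan is to prove the two inequalities separately. The easy direction $h(G) \leq \sup_\mu h_\mu(\Gamma_G)$ follows at once from the classical (compact-space) variational principle. For each finite subgraph $H \subset G$, the system $(\Gamma_H,\sigma)$ is a compact $\sigma$-invariant subsystem of $\Gamma_G$, so the classical variational principle produces $\sigma$-invariant probabilities on $\Gamma_H$ with metric entropy arbitrarily close to $h_{top}(\Gamma_H,\sigma)$. Extending such a measure by zero to $\Gamma_G$ preserves its entropy, and passing to the supremum over finite $H$ and using the definition of $h(G)$ yields the inequality.

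For the harder direction $h_\mu \leq h(G)$, let $\mu$ be a $\sigma$-invariant probability. By the ergodic decomposition and affineness of metric entropy in $\mu$, I may assume $\mu$ is ergodic; I may also assume $h(G)<\infty$, else the claim is vacuous. Since $\Gamma_G = \bigsqcup_{v\in V}[v]$, some vertex $u$ satisfies $\rho := \mu([u])>0$. Setting $\hat\mu := \mu|_{[u]}/\rho$, Kac's lemma gives $\int \tau\,d\hat\mu = 1/\rho$, where $\tau$ is the first-return time of $\sigma$ to $[u]$. Decomposing each bi-infinite orbit that meets $[u]$ infinitely often into its sequence of first-return loops at $u$ identifies the induced system $(\hat\sigma,[u],\hat\mu)$ with a shift-invariant probability on the full shift over the countable alphabet $Y$ of first-return loops at $u$; note $\#\{\ell\in Y:|\ell|=n\}=f^G_{uu}(n)$. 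Abramov's formula then yields $h_\mu(\sigma)=\rho\,h_{\hat\mu}(\hat\sigma)$.

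The core estimate bounds $h_{\hat\mu}(\hat\sigma)$ by the single-coordinate entropy. The time-zero partition $\alpha_0:=\{[\ell]:\ell\in Y\}$ is a generator of the full shift on $Y$, so $h_{\hat\mu}(\hat\sigma) \leq H_{\hat\mu}(\alpha_0) = -\sum_\ell p_\ell\log p_\ell$ with $p_\ell:=\hat\mu([\ell])$. I compare $(p_\ell)$ with the subprobability $q_\ell := R^{|\ell|}$, for which $\sum_\ell q_\ell = \sum_n f^G_{uu}(n)R^n \leq 1$ by the first row of Table~\ref{tab:classification}. The Gibbs inequality $\sum_\ell p_\ell\log(p_\ell/q_\ell) \geq 1 - \sum_\ell q_\ell \geq 0$, combined with $\sum_\ell |\ell|p_\ell = 1/\rho$ (the Kac identity), yields
\[
H_{\hat\mu}(\alpha_0) \leq -\sum_\ell p_\ell\log q_\ell = -(\log R)\sum_\ell |\ell|p_\ell = \frac{h(G)}{\rho}.
\]
Multiplying by $\rho$ and invoking Abramov closes the argument.

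The main obstacle is the rigorous setup of the induced system: verifying that under the ergodic hypothesis $\mu$-a.e. orbit in $[u]$ returns to $[u]$ infinitely often in both time directions (Poincaré recurrence plus ergodicity), so that the loop decomposition gives a measurable isomorphism of $(\hat\sigma,[u],\hat\mu)$ with a shift-invariant measure on $Y^{\mathbb Z}$, and that Abramov's entropy identity applies in this noncompact countable-state setting. A secondary subtlety is the inequality $h_{\hat\mu}(\hat\sigma) \leq H_{\hat\mu}(\alpha_0)$ for a countable generator; this is the standard generator bound and is harmless here because the right-hand side turns out a posteriori to be finite, bounded by $h(G)/\rho$.
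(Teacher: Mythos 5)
This theorem is quoted from Gurevich \cite{Gur1}; the paper gives no proof of it, so there is nothing internal to compare your argument with, and I can only assess it on its own terms. What you describe is the standard modern proof of Gurevich's variational principle: the inequality $h(G)\le\sup_\mu h_\mu$ by pushing the compact-space variational principle through finite subgraphs $H\subset G$, and the reverse inequality by inducing an ergodic $\mu$ on a cylinder $[u]$ of positive measure, identifying the induced system with a countable-alphabet full shift on first-return loops, and combining Kac, Abramov and the Gibbs inequality against the sub-probability $q_\ell=R^{|\ell|}$. The skeleton is correct, and the key computation $H_{\hat\mu}(\alpha_0)\le-\sum_\ell p_\ell\log q_\ell=(-\log R)/\rho$ is right; your worry about circularity in the countable-generator step is unfounded, since the finiteness of $H_{\hat\mu}(\alpha_0)$ is obtained from the Gibbs estimate before the generator theorem is invoked.

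One genuine gap remains. The theorem is stated for an arbitrary oriented graph $G$, but every ingredient you use in the upper bound --- the common radius $R(G)$, the identity $h(G)=-\log R(G)$ of Proposition~\ref{prop:hR}, and the bound $\sum_{n}f^G_{uu}(n)R^n\le 1$ from the first line of Table~\ref{tab:classification} --- is only available for strongly connected graphs. You need a preliminary reduction: for an ergodic $\mu$, the set of vertices $v$ with $\mu([v])>0$ spans a strongly connected subgraph $G''\subset G$ (ergodicity forces a.e.\ orbit to visit any two charged cylinders infinitely often, hence there are paths both ways), the measure $\mu$ is carried by $\Gamma_{G''}$, and $h(G'')\le h(G)$ directly from the definition of Gurevich entropy as a supremum over finite subgraphs; the whole inducing argument should then be run inside $G''$, giving $h_\mu\le h(G'')\le h(G)$. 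The remaining technical debts you flag yourself (the measurable isomorphism with the loop shift, Abramov's formula in the countable-state setting) are real but standard. With the strong-connectivity reduction supplied, the argument is a complete and correct proof of the cited theorem.
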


In this variational principle, the supremum is not necessarily reached.
The next theorem gives a necessary and sufficient condition for the existence 
of a measure of maximal entropy (that is, a probability measure $\mu$ such that
$h(G)=h_{\mu}(\Gamma_G)$) when the graph is strongly connected.

\begin{theorem}[Gurevich \cite{Gur2}]\label{theo:maxmeasure}
Let $G$ be a strongly connected oriented graph of finite positive entropy. 
Then the topological Markov chain on $G$ admits a measure of maximal
entropy if and only if the graph $G$ is positive recurrent. 
Moreover, such a measure is unique if it exists.
\end{theorem}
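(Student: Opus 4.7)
The plan is to reduce to the dynamical system induced by first returns to a single vertex, and then extract positive recurrence from one sharp Gibbs-type inequality.

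Fix a vertex $u$ and consider the cylinder $[u]:=\{x\in\Gamma_G\mid x_0=u\}$. Strong connectivity together with the finiteness of $h(G)$ allows attention to be restricted to ergodic $\sigma$-invariant probabilities $\mu$ with $\mu([u])>0$. The first-return map $\sigma^{\tau}\colon[u]\to[u]$ is conjugate to the full one-sided shift on the countable alphabet $\mathcal{L}$ of first-return loops at $u$, with $|\{\ell\in\mathcal{L}:|\ell|=n\}|=f_{uu}^G(n)$. Writing $\mu_u$ for the normalised induced measure and $\mathcal{P}$ for the loop partition, Abramov's formula reads $h_\mu(\sigma)=\mu([u])\,h_{\mu_u}(\sigma^{\tau})=\mu([u])\,H_{\mu_u}(\mathcal{P})$, while Kac's formula gives $\int\tau\,d\mu_u=1/\mu([u])$.

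Set $F:=\sum_{n\ge 1}f_{uu}^G(n)R^n\le 1$ (line~1 of Table~\ref{tab:classification}). The Gibbs inequality applied with reference weights $q_\ell:=R^{|\ell|}/F$ yields
\[
H_{\mu_u}(\mathcal{P})\;\le\;-\sum_{\ell\in\mathcal{L}}\mu_u(\ell)\log q_\ell\;=\;-\log R\cdot\!\int\!\tau\,d\mu_u+\log F.
\]
Multiplying by $\mu([u])$ and applying Kac rearranges this to $h_\mu(\sigma)\le -\log R + \mu([u])\log F \le h(G)$. Equality forces both $F=1$ (so $G$ is recurrent) and $\mu_u(\ell)=R^{|\ell|}$ for every loop $\ell$; but then $\int\tau\,d\mu_u=\sum_n n f_{uu}^G(n)R^n$ must be finite, which is precisely positive recurrence (line~2 of Table~\ref{tab:classification}). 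Thus a measure of maximal entropy can exist only if $G$ is positive recurrent, and its induced law is pinned down uniquely, which by the Markov property determines $\mu$ itself, giving uniqueness.

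For the converse, assuming $G$ positive recurrent, I would construct $\mu$ from eigenvectors of the adjacency matrix $A$: the series $r_v:=\sum_n f_{uv}^G(n)R^n$ and its analogue $\ell_v$ on the reversed graph are finite and positive, with $Ar=R^{-1}r$ and $\ell A=R^{-1}\ell$. Normalising $\sum_v \ell_v r_v=1$, the Markov measure with transitions $P_{vw}:=RA_{vw}r_w/r_v$ and stationary distribution $\pi_v:=\ell_v r_v$ is $\sigma$-invariant, realises $\mu_u(\ell)=R^{|\ell|}$ at $u$, and a direct Rokhlin-formula computation gives $h_\mu(\sigma)=-\log R$. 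The main hurdle I anticipate is less in this construction---Vere-Jones' classification is designed to furnish the required summability---than in the upper-bound half: handling measures with $\mu([u])=0$ for \emph{every} vertex $u$ (which are a genuine concern on non-compact $\Gamma_G$, but must have vanishing entropy by a recurrence/Poincaré argument), and justifying Abramov/Kac cleanly in the countable-state, non-compact setting.
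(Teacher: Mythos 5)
The paper does not prove this theorem: it is quoted from Gurevich \cite{Gur2} and used as a black box, so there is no internal proof to compare yours against. Judged on its own, your outline is the standard argument for this result and its skeleton is sound: inducing on a vertex cylinder $[u]$ and combining Abramov, Kac and the Gibbs inequality against the weights $R^{|\ell|}/F$ gives $h_\mu(\sigma)\le -\log R+\mu([u])\log F\le h(G)$, the equality analysis forces $F=1$ and $\mu_u(\ell)=R^{|\ell|}$, hence $\sum_n n f_{uu}^G(n)R^n=\int\tau\,d\mu_u=1/\mu([u])<+\infty$, which is positive recurrence by Table~\ref{tab:classification}; and the $\ell$--$r$ eigenvector Markov measure gives existence, the normalisability $\sum_v\ell_v r_v<+\infty$ being exactly Vere-Jones' characterisation of positive recurrence \cite{Ver1}. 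Two points need repair. First, $h_{\mu_u}(\sigma^{\tau})=H_{\mu_u}(\mathcal{P})$ should be $h_{\mu_u}(\sigma^{\tau})\le H_{\mu_u}(\mathcal{P})$: the partition $\mathcal{P}$ generates, but equality holds only when the successive loop symbols are independent under $\mu_u$. This costs nothing for the upper bound, but the saturation of \emph{this} inequality is what forces $\mu_u$ to be the Bernoulli measure with weights $R^{|\ell|}$, and that is where uniqueness actually comes from; as written, your uniqueness claim silently assumes it. Second, the ``main hurdle'' you flag is vacuous: the cylinders $\{[v]\}_{v\in V}$ form a countable measurable partition of $\Gamma_G$, so every probability measure charges some vertex cylinder; after passing to ergodic components (legitimate since entropy is affine on the simplex of invariant measures), you run the argument at any $u$ with $\mu([u])>0$, and the conclusion is vertex-independent because the Vere-Jones classification is. The remaining technicalities (Abramov and Kac for countable-state two-sided shifts with integrable return time; absolute convergence in the Rokhlin computation, which can be bypassed by verifying maximality of the constructed measure directly through its induced system at $u$) are standard but should be cited rather than waved at.
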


\section{Construction of graphs of given entropy and given period
that are either positive recurrent or transient}

\begin{lemma}\label{lem:an}
Let $\beta\in (1,+\infty)$. There exist a sequence of non negative integers
$(a(n))_{n\ge 1}$ and positive constants $c,M$ such that
\begin{itemize}
\item $a(1)= 1$,
\item $\sum_{n\ge 1}a(n)\frac{1}{\beta^n}=1$,
\item $\forall n\ge 2$, $c\cdot \beta^{n^2-n}\le a(n^2)\le c\cdot \beta^{n^2-n}
+M$,
\item $\forall n\ge 1$, $0\le a(n)\le M$ if $n$ is not a square.
\end{itemize}
These properties imply that the radius of convergence of
$\sum_{n\ge 1}a(n)z^n$ is $L=\frac{1}{\beta}$ and that
$\sum_{n\ge 1} n a(n)L^n<+\infty$.
\end{lemma}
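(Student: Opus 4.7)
The plan is to specify the values $a(k^2)$ at perfect squares so that their total contribution to $\sum a(n)/\beta^n$ stays strictly below $1-1/\beta$, then absorb the positive residual by a greedy base-$\beta$ expansion distributed over the non-square indices. Concretely, I would fix a small constant $c>0$ (depending on $\beta$) and set $a(k^2):=\lceil c\beta^{k^2-k}\rceil$ for every $k\ge 2$; this satisfies $c\beta^{k^2-k}\le a(k^2)\le c\beta^{k^2-k}+1$ automatically. Together with $a(1):=1$, the contribution from $\{1\}\cup\{k^2:k\ge 2\}$ is at most $\tfrac{1}{\beta}+\tfrac{c}{\beta(\beta-1)}+\sum_{k\ge 2}\beta^{-k^2}$, so for $c$ chosen sufficiently small the residual $\delta := 1-\tfrac{1}{\beta}-\sum_{k\ge 2} a(k^2)\beta^{-k^2}$ is strictly positive.

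The next step is a greedy expansion on the non-squares. Enumerate the non-square integers $\ge 2$ as $n_1<n_2<\cdots$; since any two consecutive perfect squares differ by at least $3$, the gap $n_{i+1}-n_i$ never exceeds $2$. Set $\delta_0:=\delta$, and inductively $a(n_i):=\lfloor\delta_{i-1}\beta^{n_i}\rfloor$, $\delta_i:=\delta_{i-1}-a(n_i)\beta^{-n_i}\in[0,\beta^{-n_i})$. From $\delta_{i-1}<\beta^{-n_{i-1}}$ one gets $a(n_i)\le \delta_{i-1}\beta^{n_i}<\beta^{n_i-n_{i-1}}\le\beta^2$, and similarly $a(n_1)<\delta\beta^2<\beta^2$, so every non-square value is bounded by $\lceil\beta^2\rceil$. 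Taking $M:=\lceil\beta^2\rceil$ makes both uniform bounds in the statement hold simultaneously. Since $\delta_i\to 0$, a telescoping sum yields $\sum_{n\ge 1} a(n)\beta^{-n}=1$.

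The radius-of-convergence and first moment claims then drop out of the bullet-point bounds. Cauchy--Hadamard combined with $c\beta^{k^2-k}\le a(k^2)\le c\beta^{k^2-k}+M$ gives $a(k^2)^{1/k^2}\to\beta$, whereas $a(n)^{1/n}\le M^{1/n}\to 1<\beta$ for non-square $n$; hence $\limsup a(n)^{1/n}=\beta$ and $L=1/\beta$. For the first moment, splitting $\sum n a(n) L^n$ according to whether $n$ is a square gives $\sum_{k\ge 2} k^2(c\beta^{-k}+M\beta^{-k^2})$ and $\sum_n n M\beta^{-n}$, both of which are convergent.

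The main obstacle is keeping the greedy digits $a(n_i)$ bounded by a single constant (depending on $\beta$, not on $i$). This is exactly where the density of non-squares matters: because consecutive non-squares differ by at most $2$, a single greedy step can absorb at most $\beta^2$ worth of mass, preventing the digits from blowing up as $i$ grows. A secondary technical point is verifying that $\delta>0$, which amounts to choosing $c$ small enough once $\beta$ is fixed.
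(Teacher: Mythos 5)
Your architecture is essentially the paper's: seed the squares with integers close to $c\beta^{k^2-k}$, absorb the leftover mass $\delta$ by a base-$\beta$ digit expansion with uniformly bounded digits, then get $L=1/\beta$ from Cauchy--Hadamard along the squares and the finite first moment by splitting the sum over squares and non-squares. The greedy-digit bound via the gaps between consecutive non-squares is correct, as are the two final computations. The genuine gap is the step ``for $c$ chosen sufficiently small the residual $\delta$ is strictly positive.'' Because you round up, $a(k^2)=\lceil c\beta^{k^2-k}\rceil\ge 1$ for every $k\ge 2$ no matter how small $c$ is, so the squares alone contribute at least $\sum_{k\ge 2}\beta^{-k^2}$ to $\sum_n a(n)\beta^{-n}$, a quantity that does not shrink with $c$. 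For $\beta$ close to $1$ it exceeds $1-\frac1\beta$ by a wide margin (for $\beta=1.1$ it is about $1.46$, while $1-\frac1\beta\approx 0.09$), so $\delta<0$ for every $c>0$, the greedy expansion never starts, and the construction fails on a whole range of $\beta$. Your own displayed bound $\frac1\beta+\frac{c}{\beta(\beta-1)}+\sum_{k\ge 2}\beta^{-k^2}$ already contains the offending third term; shrinking $c$ only controls the middle one.

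The paper avoids this by rounding \emph{down}: it takes $c:=(\beta-1)^2$, exactly the value making $\frac1\beta+c\sum_{n\ge2}\beta^{-n}=1$, sets $b(n^2):=\lfloor c\beta^{n^2-n}\rfloor$, and thereby gets $\delta:=1-\sum_n b(n)\beta^{-n}\in[0,1)$ for free, the residual being rewritten as a $\beta$-expansion with digits bounded by $M=\beta+\lfloor\beta^2\delta\rfloor$. Be aware that the difficulty you ran into is partly forced by the statement itself: any integer sequence with $a(n^2)\ge c\beta^{n^2-n}>0$ for all $n\ge2$ has $a(n^2)\ge 1$, so the pointwise lower bound in the third bullet is in tension with $\sum_n a(n)\beta^{-n}=1$ when $\beta$ is near $1$; with floors the lower bound is only guaranteed once $c\beta^{n^2-n}\ge 1$, i.e.\ for $n$ large, which is all that the conclusions $L=\frac1\beta$ and $\sum_n na(n)L^n<+\infty$ --- and their use in Proposition~\ref{prop:shift-entropiefixee} --- actually require. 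If you replace your ceilings by floors with $c\le(\beta-1)^2$ and only claim the lower bound asymptotically, the rest of your write-up goes through unchanged.
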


\begin{proof}
First we look for a constant $c>0$ such that
\begin{equation}\label{eq:c}
\frac{1}{\beta}+c\sum_{n\ge 2}\beta^{n^2-n}\frac{1}{\beta^{n^2}}=1.
\end{equation}
We have
\[
\sum_{n\ge 2}\beta^{n^2-n}\frac{1}{\beta^{n^2}}=\sum_{n\ge 2}\beta^{-n}
=\frac{1}{\beta(\beta-1)}.
\]
Thus
\[
\eqref{eq:c}
\Longleftrightarrow \frac{1}{\beta}+\frac{c}{\beta(\beta-1)}=1\Longleftrightarrow 
c=(\beta-1)^2.
\]
Since $\beta>1$, the constant $c:=(\beta-1)^2$ is positive.
We define the sequence $(b(n))_{n\ge 1}$ by:
\begin{itemize}
\item $b(1):=1$,
\item $b(n^2):=\lfloor c \beta^{n^2-n}\rfloor$ for all $n\ge 2$,
\item $b(n):=0$ for all $n\ge 2$ such that $n$ is not a square.
\end{itemize}
Then 
\[
\sum_{n\ge 1}b(n)\frac{1}{\beta^n}\le \frac{1}{\beta}+
c\sum_{n\ge 2}\beta^{n^2-n}\frac{1}{\beta^{n^2}}=1.
\]
We set $\delta:=1-\sum_{n\ge 1}b(n)\frac{1}{\beta^n}\in [0,1)$
and $k:=\lfloor \beta^2\delta\rfloor$. Then
$k\le \beta^2\delta<k+1<k+\beta$, which implies that
$0\le\delta-\frac{k}{\beta^2}<\frac{1}{\beta}$.
We write the $\beta$-expansion of $\delta-\frac{k}{\beta^2}$
(see e.g. \cite[p 51]{DK} for the definition): 
there exist integers 
$d(n)\in\{0,\ldots, \lfloor \beta\rfloor\}$ such that
$\delta-\frac{k}{\beta^2}=\sum_{n\ge 1}d(n)\frac{1}{\beta^n}$.
Moreover, $d(1)=0$ because $\delta-\frac{k}{\beta^2}<\frac{1}{\beta}$.
Thus we can write
\[
\delta=\sum_{n\ge 2}d'(n)\frac{1}{\beta^n}
\]
where $d'(2):=d(2)+k$ and $d'(n):=d(n)$ for all $n\ge 3$.

We set $a(1):=b(1)$ and $a(n):=b(n)+d'(n)$ for all $n\ge 2$.
Let $M:=\beta+k$. We then have:
\begin{itemize}
\item $a(1)=1$,
\item $\sum_{n\ge 1}a(n)\frac{1}{\beta^n}=1$,
\item $\forall n\ge 2$, $c\cdot \beta^{n^2-n}\le a(n^2)\le 
c\cdot \beta^{n^2-n}+\beta\le c\cdot \beta^{n^2-n}+M$, 
\item $0\le a(2)\le\beta+k=M$,
\item $\forall n\ge 3$, $0\le a(n)\le \beta\le M$ if $n$ is not a square.
\end{itemize}
The radius of convergence $L$ of $\sum_{n\ge 1}a(n)z^n$ satisfies
\[
-\log L= \limsup_{n\to+\infty} \frac1n \log a(n)=\lim_{n\to+\infty} \frac{1}{n^2} \log a(n^2)=\log \beta
\quad\mbox{because }a(n^2)\sim c\beta^{n^2-n}.
\]
Thus $L=\frac{1}{\beta}$. Moreover,
\[
\sum_{n\ge 1}n a(n)\frac{1}{\beta^n}\le
M\sum_{n\ge 1}n\frac{1}{\beta^n}+c\sum_{n\ge 1} n^2 \beta^{n^2-n}
\frac{1}{\beta^{n^2}}=
M\sum_{n\ge 1}\frac{n}{\beta^n}+c\sum_{n\ge 1}\frac{n^2}{\beta^{n}}<+\infty.
\]
\end{proof}

\begin{lemma}[\cite{R2}, Lemma 2.4]\label{lem:recurrent-R}
Let $G$ be a strongly connected oriented graph and $u$ a vertex.
\begin{enumerate}
\item $R<L_{uu}$ if and only if $\,\sum_{n\geq 1}f_{uu}^G(n)L_{uu}^n>1$.
\item If $G$ is recurrent, then $R$ is the unique positive number $x$ such that
$\sum_{n\geq 1}f_{uu}^G(n)x^n=1$.
\end{enumerate}
\end{lemma}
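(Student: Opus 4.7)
The plan is to exploit the classical first-return decomposition relating the generating functions
\[
F_{uu}(z):=\sum_{n\geq 1}f_{uu}^G(n)z^n,\qquad P_{uu}(z):=\sum_{n\geq 0}p_{uu}^G(n)z^n
\]
(with the convention $p_{uu}^G(0)=1$). Every loop of length $n$ based at $u$ decomposes uniquely as a concatenation of first-return loops, which translates at the level of generating functions into the identity
\[
P_{uu}(z)\bigl(1-F_{uu}(z)\bigr)=1.
\]
The radii of convergence of $P_{uu}$ and $F_{uu}$ are $R$ and $L_{uu}$ respectively, and the comparison $f_{uu}^G(n)\leq p_{uu}^G(n)$ yields the easy bound $R\leq L_{uu}$.

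For part (i), I would study $F_{uu}$ on $[0,L_{uu}]$. Because $G$ is strongly connected there exists at least one first-return loop at $u$, so some coefficient $f_{uu}^G(n_0)$ is positive; hence $F_{uu}$ is continuous, strictly increasing, and positive on $(0,L_{uu})$, and by Abel's monotone convergence $F_{uu}(L_{uu}):=\lim_{z\uparrow L_{uu}}F_{uu}(z)\in(0,+\infty]$ is well defined. If $F_{uu}(L_{uu})>1$, the intermediate value theorem produces $z_0\in(0,L_{uu})$ with $F_{uu}(z_0)=1$; the identity above forces $P_{uu}(z_0)=+\infty$, so $R\leq z_0<L_{uu}$. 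Conversely, if $F_{uu}(L_{uu})\leq 1$, then $F_{uu}(z)<1$ for every $z<L_{uu}$, so $P_{uu}(z)=1/(1-F_{uu}(z))$ is finite, whence $R\geq L_{uu}$ and therefore $R=L_{uu}$.

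For part (ii), the first row of Table~\ref{tab:classification} asserts $\sum_{n\geq 1}f_{uu}^G(n)R^n=1$ in both the null and the positive recurrent case, so $x=R$ is a solution. Uniqueness is immediate from the strict monotonicity of $F_{uu}$ on the interval where it takes finite values, combined with $F_{uu}(0)=0$.

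The only point requiring some care is the behaviour of $F_{uu}$ at the endpoint $z=L_{uu}$, where the series may diverge; one must justify the definition of $F_{uu}(L_{uu})$ and the passage to the limit by invoking monotone convergence for series with non-negative terms, and must check that the trivial case $F_{uu}\equiv 0$ is excluded by strong connectedness. Once these points are handled the two assertions fall out directly from the identity $P_{uu}(1-F_{uu})=1$.
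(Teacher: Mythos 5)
Your proof is correct, but it follows a more self-contained route than the paper's. The paper disposes of the lemma in one line by combining Table~\ref{tab:classification} with the monotonicity of $F_{uu}(x)=\sum_{n\ge1}f_{uu}^G(n)x^n$: for (i), if $R<L_{uu}$ then $G$ is positive recurrent by Proposition~\ref{prop:transient-R=L}, so $F_{uu}(R)=1$ by the first row of the table and strict monotonicity forces $F_{uu}(L_{uu})>1$; conversely, if $R=L_{uu}$ then that same row gives $F_{uu}(L_{uu})=F_{uu}(R)\le 1$ in every case. You instead derive part (i) directly from the renewal identity $P_{uu}(z)\bigl(1-F_{uu}(z)\bigr)=1$, never invoking the transient/recurrent trichotomy or Salama's proposition; in effect you re-prove the relevant entries of the classification table from scratch, since that identity is exactly the mechanism behind Vere-Jones's definitions. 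Your version buys independence from the black-boxed classification (for part (ii) you still cite the table for $F_{uu}(R)=1$, but that is essentially the paper's definition of recurrence, so nothing deep is imported), at the cost of having to justify the renewal decomposition, the passage to the limit at $z=L_{uu}$ by monotone convergence, and the non-degeneracy of $F_{uu}$ via strong connectedness --- all of which you correctly flag and which go through. Both arguments are sound; yours is longer but would survive outside the context of the Vere-Jones machinery.
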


\begin{proof} For (i) and (ii), use Table~\ref{tab:classification}
and the fact that 
$F(x)=\sum_{n\geq 1}f_{uu}^G(n)x^n$ is increasing for $x\in[0,+\infty[$.
\end{proof}

\begin{proposition}\label{prop:shift-entropiefixee}
Let $\beta\in (1,+\infty)$. There exist aperiodic strongly connected graphs 
$G'(\beta)\subset G(\beta)$ such that $h(G(\beta))=h(G'(\beta))=\log\beta$,
$G(\beta)$ is positive recurrent and $G'(\beta)$ is transient.
\end{proposition}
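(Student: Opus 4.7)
My plan is to realize the sequence $(a(n))_{n\ge 1}$ from Lemma~\ref{lem:an} as the first-return counts at a distinguished vertex of a ``bouquet of loops'' graph, and then obtain the transient example by deleting one loop. Concretely, I would fix a vertex $u_0$ and, for each $n\ge 1$, attach $a(n)$ directed loops of length $n$ based at $u_0$, where each loop uses its own set of $n-1$ fresh intermediate vertices so that any two distinct loops meet only at $u_0$. The resulting graph $G(\beta)$ is strongly connected (every vertex lies on a loop through $u_0$) and aperiodic, since $a(1)=1$ installs a self-loop at $u_0$.

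For the subgraph $G'(\beta)$, I would pick an index $n_0\ge 2$ with $a(n_0)\ge 1$ --- such $n_0$ exists because $a(1)/\beta = 1/\beta < 1 = \sum_{n\ge 1} a(n)/\beta^n$ --- and delete one loop of length $n_0$ together with its interior vertices. The self-loop at $u_0$ is untouched, so $G'(\beta)\subset G(\beta)$ is again aperiodic and strongly connected.

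The main combinatorial step is to verify
\[
f^{G(\beta)}_{u_0 u_0}(n) = a(n), \qquad f^{G'(\beta)}_{u_0 u_0}(n) = a(n) - \delta_{n,n_0},
\]
which is where the internally-disjoint construction is used: any first return to $u_0$ must traverse exactly one of the attached loops. Since the two sequences differ in only one entry, Lemma~\ref{lem:an} gives $L_{u_0 u_0} = 1/\beta$ in both graphs. For $G(\beta)$, the identity $\sum_n a(n)/\beta^n = 1$ together with Lemma~\ref{lem:recurrent-R}(i) forces $R(G(\beta)) = L_{u_0 u_0} = 1/\beta$; row~1 of Table~\ref{tab:classification} then gives recurrence, and row~2 together with $\sum_n n\,a(n)/\beta^n < \infty$ upgrades this to positive recurrence. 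For $G'(\beta)$ the sum at $z=1/\beta$ drops to $1 - \beta^{-n_0} < 1$, so the contrapositive of Lemma~\ref{lem:recurrent-R}(i) again yields $R(G'(\beta)) = L_{u_0 u_0} = 1/\beta$, and row~1 of the table now reads as transience. In either case $h = -\log R = \log\beta$ by Proposition~\ref{prop:hR}.

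The only point that needs genuine care is the first-return identity $f_{u_0 u_0}(n) = a(n)$: once the loops are attached as internally disjoint simple paths it is almost tautological, but I would spell out why no ``hybrid'' first return can exist, namely that each intermediate vertex belongs to a unique loop and therefore forces the path to complete that loop before it can see $u_0$ again. Everything else is a direct appeal to Lemma~\ref{lem:an} and the classification table, so I expect no real obstacle beyond that bookkeeping.
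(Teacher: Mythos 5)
Your proposal is correct and follows essentially the same route as the paper: the same bouquet-of-loops construction realizing $f_{uu}(n)=a(n)$, the same single-loop deletion for the transient example, and the same appeal to Lemma~\ref{lem:an}, Lemma~\ref{lem:recurrent-R} and Table~\ref{tab:classification}. The only cosmetic difference is that you derive $R=L_{uu}$ for $G(\beta)$ directly from Lemma~\ref{lem:recurrent-R}(i), whereas the paper first rules out transience by contradiction via Proposition~\ref{prop:transient-R=L}; both arguments are valid.
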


Remark: Salama proved the part of this proposition concerning
positive recurrent graphs in \cite[Theorem~3.9]{Sal2}.

\medskip
\begin{proof}
This is a variant of the proof of \cite[Example~2.9]{R2}.

Let $u$ be a vertex and let $(a(n))_{n\ge 1}$ 
be the sequence given by Lemma~\ref{lem:an} for $\beta$.
The graph $G(\beta)$ is composed of $a(n)$ loops of length $n$ based at the vertex
$u$ for all $n\geq 1$ (see Figure~\ref{fig:G-G'}). More precisely, 
define the set of vertices of $G(\beta)$ as 
\[
V:=\{u\}\cup\bigcup_{n=1}^{+\infty}\{v_k^{n,i} \mid i\in\Lbrack 1,a(n)\Rbrack, 
k\in\Lbrack 1, n-1\Rbrack\},
\]
where the vertices $v_k^{n,i}$ above are distinct.
Let $v_0^{n,i}=v_n^{n,i}=u$ for all $i\in\Lbrack 1,a(n)\Rbrack$. 
There is an arrow $v_k^{n,i}\to v_{k+1}^{n,i}$ for all 
$k\in\Lbrack 0, n-1\Rbrack, i\in\Lbrack 1, a(n)\Rbrack, 
n\geq 2$; there is an arrow $u\to u$;
and there is no other arrow in $G(\beta)$. The graph $G(\beta)$ is strongly connected  
and $f_{uu}^{G(\beta)}(n)=a(n)$ for all $n\geq 1$. 

\begin{figure}[ht]
\centerline{\includegraphics{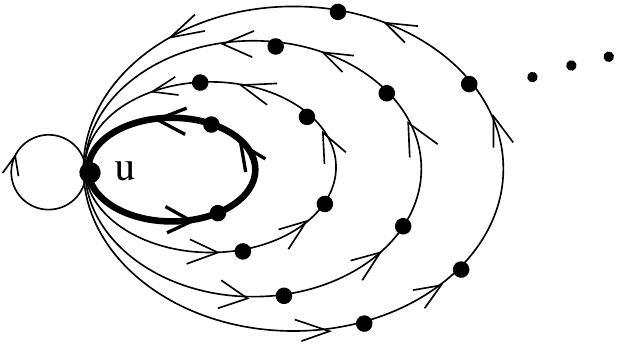}}
\caption{the graphs $G(\beta)$ and $G'(\beta)$; the bold loop belongs to $G(\beta)$ and not to $G'(\beta)$, 
otherwise the
two graphs coincide. \label{fig:G-G'}}
\end{figure}

By Lemma~\ref{lem:an}, the sequence $(a(n))_{n\geq 1}$ is defined such that
$L=\frac{1}{\beta}$ and
\begin{equation}\label{eq:=1}
\sum_{n\geq 1} a(n)L^n=1,
\end{equation}
where $L=L_{uu}(G(\beta))$ is the radius of convergence of the series 
$\sum a(n)z^n$.
If $G(\beta)$ is transient, then $R(G(\beta))=L_{uu}(G(\beta))$ by 
Proposition~\ref{prop:transient-R=L}. But 
Equation~\eqref{eq:=1} contradicts the definition of transient
(see the first line of Table~\ref{tab:classification}). Thus
$G(\beta)$ is recurrent, and $R(G(\beta))=L$ by Equation~\eqref{eq:=1}
and Lemma~\ref{lem:recurrent-R}(ii). Moreover
\[
\sum_{n\geq 1}na(n)L^n<+\infty
\]
by Lemma~\ref{lem:an}, and thus the graph $G(\beta)$ is positive recurrent
(see Table~\ref{tab:classification}). 
By Proposition~\ref{prop:hR},
$h(G(\beta))=-\log R(G(\beta))=\log \beta$.

The graph $G'(\beta)$ is obtained from $G(\beta)$ by deleting a
loop starting at $u$ of length $n_0$ for some $n_0\ge 2$ such that
$a(n_0)\ge 1$ (such an integer $n_0$ exists because $L<+\infty$).
Obviously one has $L_{uu}(G'(\beta))=L$ and
\[
\sum_{n\geq 1} f_{uu}^{G'(\beta)}(n)L^n=1-L^{n_0}<1.
\]
Since $R(G'(\beta))\leq L_{uu}(G'(\beta))$, this implies that 
$G'(\beta)$ is transient. Moreover
$R(G'(\beta))=L_{uu}(G'(\beta))$ by Proposition~\ref{prop:transient-R=L},
so $R(G'(\beta))=R(G(\beta))$, and hence $h(G'(\beta))=h(G(\beta))$
by Proposition~\ref{prop:hR}.
Finally, both $G(\beta)$ and $G'(\beta)$ are of period $1$ because of
the arrow $u\to u$.
\end{proof}

\begin{corollary}\label{cor:shift-entropiefixee}
Let $p$ be a positive integer and $h\in (0,+\infty)$. 
There exist strongly connected graphs $G,G'$ of period $p$ such that
$h(G)=h(G')=h$, $G$ is positive recurrent and $G'$ is transient.
\end{corollary}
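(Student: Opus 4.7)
The plan is to reduce to the aperiodic case already treated by Proposition~\ref{prop:shift-entropiefixee}. I set $\beta := e^{hp}$, so that $\log\beta = hp$, and apply the proposition to obtain aperiodic strongly connected graphs $G'(\beta)\subset G(\beta)$ of entropy $hp$, with $G(\beta)$ positive recurrent and $G'(\beta)$ transient. I then build $G$ from $G(\beta)$ by replacing each arrow $x\to y$ with a directed path $x\to w^{x,y}_1\to\cdots\to w^{x,y}_{p-1}\to y$ of length $p$ through $p-1$ fresh intermediate vertices, and I build $G'$ from $G'(\beta)$ by exactly the same subdivision, reusing the same intermediate vertices on arrows common to both so that $G'\subset G$.

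Strong connectivity of $G$ and $G'$ is inherited from that of the originals. Any path in $G$ whose endpoints are vertices of $G(\beta)$ arises in a unique way by concatenating the subdivisions of the arrows of a path in $G(\beta)$; in particular, at the base vertex $u$ of $G(\beta)$ one has $p^G_{uu}(pn)=p^{G(\beta)}_{uu}(n)$, $p^G_{uu}(m)=0$ when $p\nmid m$, and the analogous identity $f^G_{uu}(pn)=f^{G(\beta)}_{uu}(n)$ for first returns. The gcd of the return times at $u$ in $G$ is therefore $p$ times the gcd in $G(\beta)$, which equals $p$ by aperiodicity, and the same holds for $G'$; both graphs have period exactly $p$. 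Proposition~\ref{prop:hR} then gives $h(G) = h(G(\beta))/p = h$ and $R(G) = R(G(\beta))^{1/p}$, and analogously $h(G') = h$, $R(G') = R(G'(\beta))^{1/p}$.

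To pin down the recurrence type I substitute these identities into Table~\ref{tab:classification}. Using $R(G)^{pn}=R(G(\beta))^n$,
\[
\sum_{n\geq 1} f^G_{uu}(n) R(G)^n = \sum_{n\geq 1} f^{G(\beta)}_{uu}(n) R(G(\beta))^n = 1,
\]
and
\[
\sum_{n\geq 1} n f^G_{uu}(n) R(G)^n = p\sum_{n\geq 1} n f^{G(\beta)}_{uu}(n) R(G(\beta))^n < +\infty,
\]
so $G$ is positive recurrent. For $G'$, the same scaling gives $L_{uu}(G')^p = L_{uu}(G'(\beta))$ and
\[
\sum_{n\geq 1} f^{G'}_{uu}(n) L_{uu}(G')^n = \sum_{n\geq 1} f^{G'(\beta)}_{uu}(n) L_{uu}(G'(\beta))^n = 1-L_{uu}(G'(\beta))^{n_0} < 1.
\]
Since $R\leq L_{uu}$ always (last line of Table~\ref{tab:classification}), Lemma~\ref{lem:recurrent-R}(i) forces $R(G')=L_{uu}(G')$, hence $\sum f^{G'}_{uu}(n) R(G')^n < 1$, and the first line of the table then yields that $G'$ is transient.

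The entire argument is bookkeeping once the subdivision is fixed. The single nontrivial point to check, which I expect to be routine, is the length-$p$ scaling of the various generating functions under arrow subdivision: the substitution $z\mapsto z^p$ simultaneously divides the entropy by $p$, multiplies the gcd of return times by $p$, and preserves both equations $\sum f_{uu}(n)R^n=1$ and $\sum n f_{uu}(n)R^n<+\infty$ (up to a factor $p$ in the second), so entropy, period, and recurrence class all transfer cleanly from the aperiodic case to period $p$.
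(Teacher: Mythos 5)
Your proof is correct and takes essentially the same route as the paper: set $\beta=e^{hp}$, invoke Proposition~\ref{prop:shift-entropiefixee}, and stretch all path lengths by a factor of $p$ so that period, entropy and recurrence class transfer as you describe. The only (immaterial) difference is that the paper implements the stretching by replacing each vertex $v$ with a chain $(v,1)\to\cdots\to(v,p)$ rather than by subdividing each arrow; the resulting identities for $p_{uu}$ and $f_{uu}$ are the same, and your explicit verification via the generating functions is just a more detailed version of the paper's one-line appeal to ``the properties of $G(\beta)$''.
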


\begin{proof}
For $G$, we start from the graph $G(\beta)$ given by
Proposition~\ref{prop:shift-entropiefixee} with $\beta=e^{hp}$. 
Let $V$ denote the set of vertices of $G(\beta)$.
The set of vertices of $G$ is $V\times\Lbrack 1,p\Rbrack$, and the
arrows in $G$ are:
\begin{itemize}
\item $(v,i)\to (v,i+1)$ if $v\in V$, $i\in\Lbrack 1,p-1\Rbrack$,
\item$(v,p)\to (w,1)$ if $v,w\in V$ and $v\to w$ is an arrow
in $G(\beta)$.
\end{itemize}
According to the properties of $G(\beta)$, $G$ is 
strongly connected, of period $p$ and positive recurrent. 
Moreover, $h(G)=\frac1p h(G(\beta))=\frac1p\log \beta=h$.

For $G'$, we do the same starting with $G'(\beta)$.
\end{proof}

\medskip
According to Theorem~\ref{theo:maxmeasure}, the graphs of
Corollary~\ref{cor:shift-entropiefixee} satisfy that
the topological Markov chain on $G$ admits a measure of maximal 
entropy whereas the topological Markov chain on $G'$
admits no measure of maximal entropy; both are transitive,
of Gurevich entropy $h$ and supported by a graph of period $p$.

\bibliographystyle{plain}

\end{document}